\titleformat*{\section}{\bf\large\center\uppercase} 
\newcommand{\GG}[1]{}
\theoremstyle{definition}
\newtheorem{prop}{Proposition}
\newtheorem{lemma}{Lemma}
\newtheorem*{corollary*}{Corollary}
\apptocmd{\sloppy}{\hbadness 10000\relax}{}{} 
\DeclareMathOperator*{\argmax}{arg\,max}
\begin{document}
\doublespacing
\title{\bf Demystifying the bias from selective inference: A revisit to Dawid's treatment selection problem}
\author{Jiannan Lu \footnote{Address for correspondence: Jiannan Lu, Microsoft Corporation, One Microsoft Way, Redmond, Washington 98052, USA.
Email: \texttt{jiannl@microsoft.com}}~ and Alex Deng  \\ Analysis and Experimentation, Microsoft Corporation}
\date{}
\maketitle
\begin{abstract}
We extend the heuristic discussion in \cite{Senn:2008} on the bias from selective inference for the treatment selection problem \citep{Dawid:1994}, by deriving the closed-form expression for the selection bias. We illustrate the advantages of our theoretical results through numerical and simulated examples. 
\end{abstract}
\textbf{Keywords:} Bayesian inference; posterior mean; selection paradox; multivariate truncated normal.

\section{Introduction}
Selective inference gained popularity in recent years \citep[e.g.,][]{Lockhart:2014, GSell:2016, Reid:2016}. To quote \cite{Dawid:1994}, ``... a great deal of statistical practice involves, explicitly or implicitly, a two stage analysis of the data. At the first stage, the data are used to identify a particular parameter on which attention is to focus; the second stage then attempts to make inferences about the selected parameter.'' Consequently, the results (e.g., point estimates, $p-$values) produced by selective inference are generally ``cherry-picked'' \citep{Taylor:2015}, and therefore it is of great importance for practitioners to conduct ``exact post-selection inference'' \citep[e.g.,][]{Tibshirani:2014, Lee:2015}. 

To demonstrate the importance of ``exact post-selection inference,'' in this paper we focus on the ``bias'' of the posterior mean associated with the most extreme observation (formally defined later, and henceforth referred to as ``selection bias'') in the treatment selection problem \citep{Dawid:1994}, which is not only fundamental in theory, but also of great practical importance in, e.g., agricultural studies, clinical trials, and large-scale online experiments \citep{Kohavi:2013}. In an illuminating paper, \cite{Senn:2008} provided a heuristic explanation that the existence of selection bias depended on the prior distribution, and upheld Dawid's claim that the fact that selection bias did not exist in some standard cases was a consequence of using certain conjugate prior. In this paper, we relax the modeling assumptions in \cite{Senn:2008} and derive the closed-form expression for the selection bias. Consequently, our work can serve as a complement of the heuristic explanation provided by \cite{Senn:2008}, and is useful from both theoretical and practical perspectives.

The paper proceeds as follows. Section \ref{sec:intro} reviews the treatment selection problem, defines the selection bias, and describes the Bayesian inference framework which the remaining parts of the paper are based on. Section \ref{sec:theory} derives the closed-form expression for the selection bias. Section \ref{sec:examples} highlights numerical and simulated examples that illustrates the advantages of our theoretical results. Section \ref{sec:discuss} concludes and discusses future directions.

\section{Bayesian Inference for Treatment Selection Problem}\label{sec:intro}

\subsection{Treatment Selection Problem and Selection Bias}

Consider an experiment with $p \ge 2$ treatment arms. For $i=1, \ldots, p,$ let $\mu_i$ denote the mean yield of the $i$th treatment arm. After running the experiment, we observe the sample mean yield of the $i$th treatment arm, denoted as $X_i.$ Let
\begin{equation*}
i^* = \argmax_{1 \le i \le p} X_i    
\end{equation*}
denote the index of the largest observation. The focus of selective inference is on $\mu_{i^*},$ which relies on $X_1, \ldots, X_p.$ We let
$
\textrm{E} ( \mu_{i^*} \mid X_{i^*})
$
be the posterior mean of $\mu_{i^*}$ as if it were selected before the experiment, and \begin{equation*}
\textrm{E} ( \mu_{i^*} \mid X_{i^*}, X_{i^*} = \max X_i)
\end{equation*}
be the ``exact post-selection'' posterior mean of $\mu_{i^*},$ which takes the selection into account. Following \cite{Senn:2008}, we define the selection bias as
\begin{equation}\label{eq:target-original}
\Delta = \textrm{E} ( \mu_{i^*} \mid X_{i^*}) - \textrm{E} ( \mu_{i^*} \mid X_{i^*}, X_{i^*} = \max X_i ).
\end{equation}

Having defined the selection bias, we briefly discuss the ``selection paradox'' in \cite{Dawid:1994}, i.e., ``since Bayesian posterior distributions are already fully conditioned on the data, the posterior distribution of any quantity is the same, whether
it was chosen in advance or selected in the light of the data.'' In other words, if we define the selection bias as
\begin{equation*}
\tilde \Delta = \textrm{E} ( \mu_{i^*} \mid X_1, \ldots, X_p) - \textrm{E} ( \mu_{i^*} \mid X_1, \ldots, X_p, X_{i^*} = \max X_i ),
\end{equation*}
then indeed $\tilde \Delta = 0.$

\subsection{The Normal-Normal Model}

Let $\bm \mu = (\mu_1, \ldots, \mu_p)^\prime$ and $\bm X = (X_1, \ldots, X_p)^\prime.$ Following \cite{Dawid:1994}, we treat them as random vectors. We generalize \cite{Senn:2008} and assume that
\begin{equation}\label{eq:normal-normal}
\bm \mu \sim N(\bm 0, \bm \Sigma_0),
\quad
\bm X \mid \bm \mu \sim N(\bm \mu, \bm \Sigma),
\end{equation}
where
\begin{equation}\label{eq:nn-special}
\bm \Sigma_0 =
\gamma^2\bm I_p + (1-\gamma^2)\bm 1_p \bm 1_p^\prime,
\quad
\bm \Sigma =
\sigma^2\{ \eta^2\bm I_p + (1-\eta^2)\bm 1_p \bm 1_p^\prime \},
\quad
0 \le \gamma, \eta \le 1.
\end{equation}
To interpret \eqref{eq:nn-special} we let 
$
X_i = \mu_i + \epsilon_i,
$
where $\mu_i$ is generated by 
\begin{equation*}
\phi \sim N \left( 0, 1-\gamma^2 \right),
\quad
\mu_i \mid \phi \sim N \left( \phi, \gamma^2 \right),
\end{equation*}
and $\epsilon_i$ is generated by
\begin{equation*}
\xi \sim N \{ 0, (1-\eta^2)\sigma^2 \},
\quad
\epsilon_i \mid \xi \sim N(\xi, \eta^2 \sigma^2). 
\end{equation*}
Note that $\eta = 1$ in \cite{Senn:2008}, and we relax this assumption by allowing correlated errors.

\subsection{Posterior Mean}

To derive the posterior mean of $\mu_p$ given $X_1, \ldots, X_p,$ we rely on the following classic result.

\begin{lemma}[Normal Shrinkage]
\label{lemma:1}
Let
\begin{equation*}
\mu \sim N(\mu_0, \nu^2),
\quad
Z_i \mid \mu \stackrel{iid}{\sim} N(\mu, \tau^2) 
\quad
(i=1, \ldots, n).
\end{equation*}
Then the posterior mean of $\mu$ is
\begin{equation*}
\textrm E (\mu \mid Z_1, \ldots, Z_n) = \frac{\tau^2 \mu_0 + \nu^2 \sum_{i=1}^n Z_i}{\tau^2 + n\nu^2},    
\end{equation*}
\end{lemma}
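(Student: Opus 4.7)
The plan is to apply Bayes' theorem and complete the square in $\mu$ so as to recognize the posterior as a normal distribution, from which the mean can be read off directly.

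First, I would write the prior density $p(\mu) \propto \exp\{-(\mu-\mu_0)^2/(2\nu^2)\}$ and, using conditional independence of the $Z_i$'s given $\mu$, the joint likelihood $p(Z_1,\ldots,Z_n \mid \mu) \propto \exp\{-\sum_{i=1}^n (Z_i-\mu)^2/(2\tau^2)\}$. Bayes' theorem gives $p(\mu \mid Z_1,\ldots,Z_n) \propto p(\mu)\, p(Z_1,\ldots,Z_n \mid \mu)$, so the posterior exponent is a quadratic function of $\mu$.

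Second, I would expand the two quadratic forms and collect powers of $\mu$. The coefficient of $\mu^2$ becomes $-\tfrac{1}{2}(1/\nu^2 + n/\tau^2)$, while the coefficient of $\mu$ becomes $\mu_0/\nu^2 + \sum_{i=1}^n Z_i/\tau^2$. Completing the square then identifies the posterior as normal with variance $(1/\nu^2 + n/\tau^2)^{-1} = \nu^2\tau^2/(\tau^2 + n\nu^2)$ and with mean equal to the ratio of the linear coefficient to the quadratic coefficient (up to sign), which simplifies to $(\tau^2\mu_0 + \nu^2 \sum_{i=1}^n Z_i)/(\tau^2 + n\nu^2)$, as claimed.

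The main obstacle here is purely bookkeeping in the algebra; there is no subtle technical step. A cleaner alternative route, which I would mention for completeness, is to first reduce to a single-observation problem via sufficiency: $\bar Z = n^{-1}\sum_{i=1}^n Z_i$ is sufficient for $\mu$ with $\bar Z \mid \mu \sim N(\mu, \tau^2/n)$. Then the standard one-observation normal-normal conjugate update gives the posterior mean as the precision-weighted average $(\tau^2/n)^{-1}$ and $\nu^{-2}$ of $\bar Z$ and $\mu_0$, which rearranges to the stated expression without any additional computation.
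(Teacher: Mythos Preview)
Your proposal is correct; both the completing-the-square derivation and the sufficiency reduction to $\bar Z$ are standard and valid routes to the stated posterior mean. The paper itself does not supply a proof of this lemma at all: it simply labels it a ``classic result'' and invokes it as a black box in the proof of Proposition~\ref{prop:1}, so there is no paper argument to compare against.
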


\begin{prop}\label{prop:1}
The posterior mean of $\mu_p$ given $X_p$ is
\begin{eqnarray}\label{eq:post-mean-single}
\textrm E ( \mu_p \mid X_p ) & = & \frac{1}{1 + \sigma^2} X_p.
\end{eqnarray}
Furthermore, let
\begin{equation*}
a = \gamma^2 + \sigma^2\eta^2,
\quad
b = 1- \gamma^2 +  \sigma^2(1-\eta^2)
\end{equation*}
and
\begin{equation*}
r_1, \ldots, r_{p-1} = \frac{\sigma^2 (\eta^2 - \gamma^2)}{a(a+pb)},
\quad
r_p = \frac{a + (p-1)b\gamma^2}{a(a+pb)}.
\end{equation*}
The posterior mean of $\mu_p$ given $X_1, \ldots, X_p$ is
\begin{equation}\label{eq:post-mean}
\textrm E ( \mu_p \mid X_1, \ldots X_p ) = \sum_{i=1}^p r_i X_i.
\end{equation}
\end{prop}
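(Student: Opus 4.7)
My plan is to apply the multivariate normal conditional expectation formula, leveraging the Sherman--Morrison structure of $\Var(\bm X)$.

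For \eqref{eq:post-mean-single}, I would first integrate out the nuisance components so that only a clean univariate setup remains: marginally $\mu_p \sim N(0,1)$ since the $p$-th diagonal of $\bm \Sigma_0$ equals $\gamma^2 + (1-\gamma^2) = 1$, and writing $X_p = \mu_p + \epsilon_p$ with $\epsilon_p$ independent of $\bm \mu$ and $\epsilon_p \sim N(0,\sigma^2)$ (from the $p$-th diagonal of $\bm \Sigma$), Lemma \ref{lemma:1} with $n=1$, $\mu_0 = 0$, $\nu^2 = 1$, $\tau^2 = \sigma^2$ yields \eqref{eq:post-mean-single} immediately.

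For \eqref{eq:post-mean}, I would use the fact that $(\bm \mu, \bm X)$ is jointly normal so that
$$
\textrm E(\mu_p \mid \bm X) \;=\; \Cov(\mu_p, \bm X)^\prime \, [\Var(\bm X)]^{-1} \, \bm X.
$$
Since $\bm \mu$ and $\bm \epsilon$ are independent, $\Cov(\mu_p, X_i) = \Cov(\mu_p, \mu_i)$, which equals $1-\gamma^2$ for $i < p$ and $1$ for $i = p$ by the form of $\bm \Sigma_0$. The key observation is that $\Var(\bm X) = \bm \Sigma_0 + \bm \Sigma = a \bm I_p + b\, \bm 1_p \bm 1_p^\prime$ with $a, b$ as defined in the proposition, so Sherman--Morrison gives the closed-form inverse $a^{-1} \bm I_p - \{b/[a(a+pb)]\} \bm 1_p \bm 1_p^\prime$.

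The only real obstacle is the algebraic simplification of the resulting weights, which I would carry out by noting the identity $a + b = 1 + \sigma^2$. Multiplying out, for $i < p$ the numerator of $r_i$ equals $(1-\gamma^2)(a+pb) - b[p - (p-1)\gamma^2]$; expanding and cancelling yields $a - (a+b)\gamma^2 = a - (1+\sigma^2)\gamma^2 = \sigma^2(\eta^2 - \gamma^2)$, which matches the stated expression. The computation for $r_p$ is analogous: the extra $\gamma^2$ contribution from the $p$-th entry of the covariance vector produces the numerator $a + (p-1)b\gamma^2$, and both cases share the common denominator $a(a+pb)$ inherited from Sherman--Morrison, completing the derivation.
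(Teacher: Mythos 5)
Your proposal is correct, and for the first half it coincides with the paper's argument (marginally $\mu_p \sim N(0,1)$, $X_p \mid \mu_p \sim N(\mu_p,\sigma^2)$, then Lemma \ref{lemma:1}). For the second half, however, you take a genuinely different route. The paper never inverts $\Var(\bm X)$: it exploits the hierarchical structure by writing $\mu_i = \phi + \mu_i^\prime$ and $\epsilon_i = \xi + \epsilon_i^\prime$, so that $\phi+\xi \sim N(0,b)$ and $X_i \mid \phi+\xi \stackrel{iid}{\sim} N(\phi+\xi,a)$, and then obtains $\textrm E(\phi \mid \bm X)$ and $\textrm E(\mu_p^\prime \mid \bm X)$ by two further applications of the scalar shrinkage Lemma \ref{lemma:1} together with iterated conditioning, summing the two pieces to get \eqref{eq:post-mean}. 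You instead invoke the joint-normal regression formula $\textrm E(\mu_p \mid \bm X) = \Cov(\mu_p,\bm X)^\prime\,[\Var(\bm X)]^{-1}\bm X$ with $\Var(\bm X) = a\bm I_p + b\bm 1_p\bm 1_p^\prime$ and the Sherman--Morrison inverse $a^{-1}\bm I_p - \{b/[a(a+pb)]\}\bm 1_p\bm 1_p^\prime$; your covariance vector ($1-\gamma^2$ off the $p$th entry, $1$ at it), the identity $a+b=1+\sigma^2$, and the cancellation giving the numerators $\sigma^2(\eta^2-\gamma^2)$ and $a+(p-1)b\gamma^2$ all check out. Your approach is more mechanical and self-contained in matrix form (and generalizes immediately to any exchangeable compound-symmetric covariance), while the paper's decomposition avoids matrix inversion entirely, uses only the univariate Lemma \ref{lemma:1}, and makes transparent how the common components $\phi,\xi$ and the idiosyncratic components $\mu_p^\prime,\epsilon_p^\prime$ each contribute to the posterior mean — which is the structure later exploited in interpreting the sign of the bias.
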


\begin{proof}[Proof of Proposition \ref{prop:1}]
To prove the first half, notice that
\begin{equation*}
\mu_p \sim N(0, 1),
\quad
X_p \mid \mu_p \sim N(\mu_p, \sigma^2),
\end{equation*}
and apply Lemma \ref{lemma:1}.

To prove the second half, note that $\mu_i = \phi + \mu_i^\prime,$ where
\begin{equation*}
\phi \sim N \left( 0, 1-\gamma^2 \right),
\quad
\mu_i^\prime \stackrel{iid}{\sim} N \left( 0, \gamma^2 \right);
\end{equation*}
and $\epsilon_i = \xi + \epsilon_i^\prime,$ where
\begin{equation*}
\xi \sim N \{ 0, (1-\eta^2)\sigma^2 \},
\quad
\epsilon_i^\prime \stackrel{iid}{\sim} N(0, \eta^2 \sigma^2). 
\end{equation*}
Consequently we have
\begin{equation*}
\phi + \xi \sim N (0, b),
\quad
X_i \mid \phi + \xi \stackrel{iid}{\sim} N ( \phi + \xi, a ),
\end{equation*}
On the one hand, 
by Lemma \ref{lemma:1}
\begin{equation*}
\textrm E(\phi + \xi \mid X_1, \ldots X_p )
= \frac{b}{a+pb} \sum_{i=1}^p X_i,
\end{equation*}
and
\begin{equation*}
\textrm E(\phi \mid \phi + \xi, X_1, \ldots, X_p ) = \frac{1 - \gamma^2}{b} E( \phi + \xi \mid X_1, \ldots X_p ).
\end{equation*}
Consequently, 
\begin{eqnarray}\label{eq:alternative-2}
\textrm E(\phi \mid X_1, \ldots X_p ) 
& = & \textrm E \{ \textrm E(\phi \mid \phi + \xi, X_1, \ldots, X_p ) \mid X_1, \ldots X_p \} \nonumber \\
& = & \frac{1 - \gamma^2}{b} E( \phi + \xi \mid X_1, \ldots X_p ) \nonumber \\
& = & \frac{1 - \gamma^2}{a + pb} \sum_{i=1}^p X_i.
\end{eqnarray}
On the other hand, similarly we have
\begin{eqnarray}\label{eq:alternative-3}
\textrm E(\mu_p^\prime \mid X_1, \ldots X_p )
& = & \frac{\gamma^2}{a} E(\mu_i^\prime + \epsilon_i^\prime \mid X_1, \ldots X_p ) \nonumber \\
& = & \frac{\gamma^2}{a} 
\left\{
X_p - \frac{b}{a+pb} \sum_{i=1}^p X_i
\right\}.
\end{eqnarray}
Combine \eqref{eq:alternative-2} and \eqref{eq:alternative-3}, we complete the proof.
\end{proof}

It is worth noting that when $\gamma = \eta,$ \eqref{eq:post-mean} reduces to \eqref{eq:post-mean-single}.

\section{Closed-form expression for the Selection Bias}\label{sec:theory}

To simplify future notations, we assume that $X_p$ is the largest observation, i.e.,
$
X_p = \max_{1 \le i \le p}X_i.    
$
Consequently, the selection bias defined in \eqref{eq:target-original} becomes
\begin{equation}\label{eq:target}
\Delta = \textrm{E} ( \mu_p \mid X_p) - \textrm{E} ( \mu_p \mid X_p, X_p = \max X_i ).
\end{equation}
To derive its closed-form expression, we rely on the following lemmas. 

\begin{lemma}\label{lemma:2}
Let $\bm X_{-p} = (X_1, \ldots, X_{p-1})^\prime,$ and its distribution conditioning on $X_p$ is
\begin{equation}\label{eq:nn-cond}
N
\left(
\frac{b}{a+b} \bm 1_{p-1} X_p, 
\;
a \bm I_{p-1} + \frac{ab}{a+b} \bm 1_{p-1} \bm 1_{p-1}^\prime
\right).
\end{equation}
\end{lemma}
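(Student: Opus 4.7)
The plan is to first obtain the marginal distribution of $\bm X$ by integrating out $\bm \mu$, then partition the resulting covariance into blocks corresponding to $\bm X_{-p}$ and $X_p$ and apply the standard Gaussian conditioning formula. No matrix inversion of the full $\bm \Sigma$ is required because the relevant bottom-right block is the scalar $\Var(X_p)$.

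First I would use the hierarchical specification in \eqref{eq:normal-normal} to conclude $\bm X \sim N(\bm 0, \bm \Sigma_0 + \bm \Sigma)$. Substituting \eqref{eq:nn-special} and recognizing the parameters $a = \gamma^2 + \sigma^2\eta^2$ and $b = 1-\gamma^2 + \sigma^2(1-\eta^2)$ from Proposition \ref{prop:1}, the two compound-symmetric pieces combine into
\begin{equation*}
\bm \Sigma_0 + \bm \Sigma = a\bm I_p + b\bm 1_p \bm 1_p',
\end{equation*}
so that in block form
\begin{equation*}
\Cov(\bm X) =
\begin{pmatrix}
a\bm I_{p-1} + b\bm 1_{p-1}\bm 1_{p-1}' & b\bm 1_{p-1} \\
b\bm 1_{p-1}' & a+b
\end{pmatrix}.
\end{equation*}

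Next I would apply the textbook conditional formula for multivariate normals. The conditional mean becomes $\frac{b}{a+b}\bm 1_{p-1} X_p$ immediately, and the conditional covariance simplifies via
\begin{equation*}
a\bm I_{p-1} + b\bm 1_{p-1}\bm 1_{p-1}' - \frac{b^2}{a+b}\bm 1_{p-1}\bm 1_{p-1}' = a\bm I_{p-1} + \frac{ab}{a+b}\bm 1_{p-1}\bm 1_{p-1}',
\end{equation*}
which is exactly \eqref{eq:nn-cond}. I do not anticipate any real obstacle: the only care needed is the bookkeeping that collapses $\bm \Sigma_0 + \bm \Sigma$ into the $(a,b)$ parameterization and the one-line identity $b - b^2/(a+b) = ab/(a+b)$. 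Everything else is the standard conditioning identity.
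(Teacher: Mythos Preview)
Your proposal is correct and is essentially identical to the paper's own proof: the paper also writes $\bm X \sim N(\bm 0, a\bm I_p + b\bm 1_p\bm 1_p')$, partitions this covariance into the same four blocks, and applies the standard Gaussian conditioning formula, arriving at the conditional mean and the same simplification $b - b^2/(a+b) = ab/(a+b)$ for the covariance.
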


\begin{proof}[Proof of Lemma \ref{lemma:2}]
By \eqref{eq:normal-normal} we have $\bm X \sim N(0, \bm \Psi),$ where
\begin{equation*}
\bm \Psi = (\psi_{jk})_{1 \le j,k \le p} = a \bm I_p + b \bm 1_p \bm 1_p^\prime.
\end{equation*}
Furthermore, let
\begin{equation*}
\bm \Psi_{11} = (\psi_{jk})_{1 \le j,k \le p-1} = a \bm I_{p-1} + b \bm 1_{p-1} \bm 1_{p-1}^\prime,
\quad
\bm \Psi_{22} = (\psi_{pp}) = a + b,
\end{equation*}
and
\begin{equation*}
\bm \Psi_{12} = (\psi_{1p}, \ldots, \psi_{p-1,p})^\prime = b \bm 1_{p-1},
\quad
\bm \Psi_{21} = (\psi_{p1}, \ldots, \psi_{p, p-1}) = b \bm 1_{p-1}^\prime.
\end{equation*}
Simple probability argument suggests that
\begin{equation*}
\bm X_{-p} \mid X_p \sim 
N 
\left( 
\bm \Psi_{12}^{-1}\bm \Psi_{22}X_p,  
\bm \Psi_{11} - \bm \Psi_{12} \bm \Psi_{22}^{-1} \bm \Psi_{21}
\right),
\end{equation*}
where
\begin{equation*}
\bm \Psi_{12} \bm \Psi_{22}^{-1}X_p 
= \frac{b}{a + b} \bm 1_{p-1}X_p
\end{equation*}
and
\begin{eqnarray*}
\bm \Psi_{11} - \bm \Psi_{12} \bm \Psi_{22}^{-1} \bm \Psi_{21} 
& = & a \bm I_{p-1} + b \bm 1_{p-1} \bm 1_{p-1}^\prime - \frac{b^2}{a + b} \bm 1_{p-1} \bm 1_{p-1}^\prime \\
& = & a \bm I_{p-1} + \frac{ab}{a + b} \bm 1_{p-1} \bm 1_{p-1}^\prime
\end{eqnarray*}
The proof is complete.
\end{proof}

To state the next lemma, we introduce some notations. First, for $\bm \theta = (\theta_1, \ldots, \theta_n)^\prime$ and positive semi-definite matrix $\bm \Omega = (\omega_{jk})_{1\le j,k \le n},$ let
\begin{equation*}
\bm Y = (Y_1, \ldots, Y_n)^\prime \sim N(\bm \theta, \bm \Omega).
\end{equation*}
Second, let 
$
V_i = Y_i - \theta_i
$
for 
$
i = 1, \ldots, n. 
$
Consequently, 
\begin{equation*}
\bm V = (V_1, \ldots, V_n)^\prime \sim N(\bm 0, \bm \Omega),
\end{equation*}
whose probability density function is
\begin{equation*}
f(\bm v) = \frac{1}{(2\pi)^{n/2}|\bm \Omega|^{1/2}}\textrm{e}^
{
-\frac{1}{2} \bm v^\prime {\bm \Omega}^{-1} \bm v
},
\quad
\bm v = (v_1, \ldots, v_n)^\prime.
\end{equation*}
Third, for constants $b_1, \ldots, b_n,$ we let 
\begin{equation*}
\alpha 
= \mathrm{Pr} (V_1 \le b_1 - \theta_1, \ldots, V_n \le b_n - \theta_n)
=\int_{v_1 \le b_1 - \theta_1, \ldots, v_n \le b_n - \theta_n} f(\bm v)d\bm v,
\end{equation*}
and $\bm W = (W_1, \ldots, W_n)^\prime$ be the truncation version of $\bm V$ from above at $(b_1 - \theta_1, \ldots, b_n - \theta_n)^\prime.$ Consequently, its probability density function is
\begin{equation*}
g(\bm w) = \frac{1}{\alpha(2\pi)^{n/2}|\bm \Omega|^{1/2}}\textrm{e}^
{
-\frac{1}{2} \bm w^\prime {\bm \Omega}^{-1} \bm w
}
\cdot
1_{\{w_1 \le b_1 - \theta_1, \ldots, w_n \le b_n - \theta_n\}},
\quad
\bm w = (w_1, \ldots, w_n)^\prime.
\end{equation*}
For all $k=1, \ldots, n,$ let the $k$th marginal density function of $\bm W$ be
\begin{equation}\label{eq:tnorm-margin}
g_k(w) = \int_{-\infty}^{b_1 - \theta_1} \ldots \int_{-\infty}^{b_{k-1} - \theta_{k-1}} \int_{-\infty}^{b_{k+1} - \theta_{k+1}} \ldots \int_{-\infty}^{b_n - \theta_n} 
g(w_1, \ldots, w_{k-1}, w, w_{k+1}, \ldots, w_n)
\prod_{l \ne k}dw_l.
\end{equation}

For efficient analytical and numerical evaluations of \eqref{eq:tnorm-margin}, see \cite{Cartinhour:1990} and \cite{Wilhelm:2010}, respectively.

\begin{lemma}\label{lemma:3}
For all $i=1, \ldots, n,$
\begin{equation*}
\textrm E(Y_i \mid Y_1 \le b_1, \ldots, Y_n \le b_n) = \theta_i - \sum_{k=1}^n \omega_{ki} g_k(b_k - \theta_k).
 \end{equation*}
\end{lemma}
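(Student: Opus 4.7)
The plan is to reduce the problem to computing $\mathrm{E}(V_i \mid V_1\le c_1,\ldots,V_n\le c_n)$ where $c_k = b_k-\theta_k$, because shifting gives $\mathrm{E}(Y_i\mid\cdot) = \theta_i + \mathrm{E}(V_i\mid\cdot)$. The key trick is a Stein/Tallis-type integration by parts that exploits the differential identity satisfied by the multivariate normal density $f$.

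First, I would observe that $\nabla \log f(\bm v) = -\bm\Omega^{-1}\bm v$, so $\partial_k f(\bm v) = -(\bm\Omega^{-1}\bm v)_k f(\bm v)$. Writing $\bm v = \bm\Omega\bm\Omega^{-1}\bm v$ component-wise gives $v_i = \sum_{k=1}^n \omega_{ik}(\bm\Omega^{-1}\bm v)_k$, hence
\begin{equation*}
v_i\,f(\bm v) \;=\; -\sum_{k=1}^n \omega_{ik}\,\partial_k f(\bm v).
\end{equation*}
This identity turns the moment integral into a sum of integrals of pure derivatives, which is exactly what is needed to apply the fundamental theorem of calculus.

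Next, I would compute $\alpha\cdot\mathrm{E}(V_i\mid V_1\le c_1,\ldots,V_n\le c_n) = \int_{\bm v\le\bm c} v_i f(\bm v)\,d\bm v$ by substituting the identity above and integrating by parts in the $v_k$ variable for each term. The interior integral $\int_{-\infty}^{c_k}\partial_k f\,dv_k$ collapses to the boundary value $f(v_1,\ldots,v_{k-1},c_k,v_{k+1},\ldots,v_n)$ (the lower limit vanishes since $f\to 0$). Integrating the remaining coordinates over $\{v_l\le c_l:l\ne k\}$ and recognising the definition of the marginal truncated density \eqref{eq:tnorm-margin} yields
\begin{equation*}
\int_{\bm v\le\bm c} \partial_k f(\bm v)\,d\bm v \;=\; \int_{v_l\le c_l,\,l\ne k} f(v_1,\ldots,c_k,\ldots,v_n)\prod_{l\ne k}dv_l \;=\; \alpha\,g_k(c_k).
\end{equation*}

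Finally, combining the two displays gives $\mathrm{E}(V_i\mid\bm V\le\bm c) = -\sum_k \omega_{ik} g_k(c_k)$; using symmetry $\omega_{ik}=\omega_{ki}$ and shifting back by $\theta_i$ produces the claimed formula. The only delicate point is the integration-by-parts step: one must verify that the boundary contribution at $-\infty$ really vanishes (which follows from the Gaussian tail bound on $f$) and that the conditional marginal $g_k$ as written in \eqref{eq:tnorm-margin} really coincides with the $(n-1)$-fold boundary integral up to the factor $\alpha$. Everything else is bookkeeping.
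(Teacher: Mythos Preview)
Your argument is correct. The reduction to $E(V_i\mid \bm V\le \bm c)$, the Stein-type identity $v_i f(\bm v)=-\sum_k\omega_{ik}\partial_k f(\bm v)$, and the one-variable fundamental theorem of calculus together give exactly the claimed formula; the identification of the boundary integral with $\alpha\,g_k(c_k)$ is also correct once one notes that at $w_k=c_k$ the indicator in $g$ is still satisfied.

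The paper takes a genuinely different route, following \cite{Manjunath:2012}: it computes the moment generating function $m(\bm t)$ of the truncated vector $\bm W$, factors it as $m_1(\bm t)m_2(\bm t)$ with $m_1=\exp(\tfrac12\bm t'\bm\Omega\bm t)$, observes that $E(W_i)=\partial_{t_i}m_2|_{\bm t=\bm 0}$, and then differentiates $m_2$ via the chain rule through the shifted upper limits $b_k^*=c_k-\sum_j\omega_{kj}t_j$ together with Leibniz's rule. Your approach is more elementary for the first moment: one differential identity plus one integration by parts, with no MGF bookkeeping. The MGF route, on the other hand, is what one would naturally extend to second and higher moments of the truncated normal, which is presumably why the paper (and the literature it cites) prefers it. For the statement at hand your argument is shorter and cleaner.
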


\begin{proof}[Proof of Lemma \ref{lemma:3}]
The proof follows \cite{Manjunath:2012}. First,
\begin{eqnarray}\label{eq:lemma-3-1}
\textrm E(Y_i \mid Y_1 \le b_1, \ldots, Y_n \le b_n) 
& = & \theta_i + E(V_i \mid V_1 \le b_1 - \theta_1, \ldots, V_n \le b_n - \theta_n) \nonumber \\
& = & \theta_i + E(W_i).
\end{eqnarray}
Next, the moment generating function of $\bm W$ at $\bm t = (t_1, \ldots, t_n)^\prime$ is 
\begin{eqnarray*}
m(\bm t) 
& = & \int \textrm e^{\bm t^\prime \bm w}g(\bm w) d\bm w \\
& = & \frac{1}{\alpha(2\pi)^{n/2}|\bm \Omega|^{1/2}} \int_{w_1 \le b_1 - \theta_1, \ldots, w_n \le b_n - \theta_n}
\textrm{e}^
{
-\frac{1}{2} 
\left(
\bm w^\prime {\bm \Omega}^{-1} \bm w - 2 \bm t^\prime \bm w
\right)
} 
d\bm w \\
& = & \underbrace{
\textrm e^{ \frac{1}{2} \bm t^\prime \bm \Omega \bm t }
\vphantom{ 
\frac{1}{\alpha(2\pi)^{n/2}|\bm \Omega|^{1/2}}
}
}_{m_1(\bm t)}
\underbrace{\frac{1}{\alpha(2\pi)^{n/2}|\bm \Omega|^{1/2}} \int_{w_1 \le b_1 - \theta_1, \ldots, w_n \le b_n - \theta_n}
\textrm e^
{
-\frac{1}{2} 
\left(
\bm w - \bm \Omega \bm t
\right)^\prime
{\bm \Omega}^{-1} 
\left(
\bm w - \bm \Omega \bm t
\right)
} 
d\bm w}_{m_2(\bm t)} \\
\end{eqnarray*}
On the one hand, by definition
\begin{eqnarray}\label{eq:lemma-3-2}
E(W_i) 
& = & \frac{\partial m (\bm t)}{\partial t_i} \rvert_{\bm t = \bm 0} \nonumber \\
& = & m_1(\bm 0) \frac{\partial m_2 (\bm t)}{\partial t_i} \rvert_{\bm t = \bm 0} + m_2(\bm 0) \frac{\partial m_1 (\bm t)}{\partial t_i} \rvert_{\bm t = \bm 0} \nonumber \\
& = & \frac{\partial m_2 (\bm t)}{\partial t_i} \rvert_{\bm t = \bm 0}.
\end{eqnarray}
On the other hand, let
\begin{equation*}
b^*_i = b_i - \theta_i - \sum_{k=1}^n \omega_{ik}t_k,
\quad
i = 1, \ldots, n,
\end{equation*}
and we can rewrite $m_2(\bm t)$ as
\begin{equation*}
m_2(\bm t) = \int_{-\infty}^{b_1^*} \ldots \int_{-\infty}^{b_n^*}
g(\bm w) dw_1 \ldots dw_n.
\end{equation*}
Therefore, by chain rule and Leibniz integral rule
\begin{eqnarray*}
\frac{\partial m_2 (\bm t)}{\partial t_i}
& = & \sum_{k=1}^n \frac{\partial b_k^*}{\partial t_i} \frac{\partial m_2 (\bm t)}{\partial b_k^*}\\
& = & - \sum_{k=1}^n \omega_{ki}
 \int_{-\infty}^{b_1^*} \ldots \int_{-\infty}^{b_{k-1}^*} \int_{-\infty}^{b_{k+1}^*} \ldots \int_{-\infty}^{b_n^*} g(w_1, \ldots, w_{k-1}, b_k^*, w_{k+1}, \ldots, w_n) \prod_{l \ne k}dw_l,
\end{eqnarray*}
and consequently
\begin{equation}\label{eq:lemma-3-3}
\frac{\partial m_2 (\bm t)}{\partial t_i} \rvert_{\bm t = 0}
= - \sum_{k=1}^n \omega_{ki} g_k(b_k - \theta_k).
\end{equation}
Combine \eqref{eq:lemma-3-1}, \eqref{eq:lemma-3-2} and \eqref{eq:lemma-3-3}, the proof is complete.
\end{proof}

\begin{prop}\label{prop:2}
For $i=1, \ldots, p-1,$ let $h_i$ denote the $i$th marginal probability density function of the random vector defined by \eqref{eq:nn-cond} truncated from above at $\bm 1_{p-1}X_p.$ Then the closed-form expression for \eqref{eq:target} is
\begin{equation}\label{eq:target-cf}
\Delta
= \frac{\sigma^2 (\eta^2 - \gamma^2)}{1 + \sigma^2} \sum_{i=1}^{p-1} h_i \left( \frac{\gamma^2 + \sigma^2 \eta^2}{1 + \sigma^2} X_p \right).
\end{equation}
\end{prop}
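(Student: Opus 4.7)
The plan is to combine Proposition 1 with Lemmas 2 and 3 via the tower property. First, conditioning on $(X_1,\ldots,X_p)$ is a refinement of conditioning on $(X_p,\,X_p=\max X_i)$, so
\[
\textrm E(\mu_p \mid X_p, X_p = \max X_i) = \textrm E\bigl[\textrm E(\mu_p \mid X_1,\ldots,X_p) \mid X_p, X_p = \max X_i\bigr].
\]
By Proposition 1 the inner expectation equals $\sum_{i=1}^p r_i X_i$, and since $r_1=\cdots=r_{p-1}$ share a common value, the problem reduces to computing the single sum $\sum_{i=1}^{p-1}\textrm E(X_i \mid X_p, X_j\le X_p \text{ for all } j<p)$.

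Next, Lemma 2 identifies the conditional law of $\bm X_{-p}$ given $X_p$ as normal with mean $\tfrac{b}{a+b}\bm 1_{p-1}X_p$ and covariance $\bm \Omega = a\bm I_{p-1} + \tfrac{ab}{a+b}\bm 1_{p-1}\bm 1_{p-1}^\prime$, and further conditioning on $X_j\le X_p$ for all $j<p$ truncates this vector from above at $\bm 1_{p-1}X_p$. I would then apply Lemma 3 with $\theta_j = \tfrac{b}{a+b}X_p$ and $b_j = X_p$, so that $b_j-\theta_j = \tfrac{a}{a+b}X_p = \tfrac{\gamma^2+\sigma^2\eta^2}{1+\sigma^2}X_p$, which is exactly the argument appearing in the claimed formula. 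The lemma then yields
\[
\textrm E(X_i \mid X_p, X_j\le X_p\;\forall j<p) = \tfrac{b}{a+b}X_p - \sum_{k=1}^{p-1}\omega_{ki}\,h_k\!\left(\tfrac{a}{a+b}X_p\right).
\]
Summing over $i$ and using the row sum $\sum_{i=1}^{p-1}\omega_{ki}=a+(p-1)\tfrac{ab}{a+b}=\tfrac{a(a+pb)}{a+b}$ expresses $\textrm E(\mu_p\mid X_p, X_p=\max X_i)$ as an affine function of $X_p$ plus a constant multiple of $\sum_k h_k\!\left(\tfrac{a}{a+b}X_p\right)$.

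The main obstacle is the algebraic verification that the coefficient of $X_p$ collapses to $\tfrac{1}{1+\sigma^2}$, so that it cancels against $\textrm E(\mu_p\mid X_p)$ in the definition of $\Delta$. Substituting the formulas for $r_1$ and $r_p$, this coefficient is
\[
r_p + (p-1)r_1\cdot\tfrac{b}{a+b} = \frac{(a+(p-1)b\gamma^2)(a+b) + (p-1)b\sigma^2(\eta^2-\gamma^2)}{a(a+pb)(a+b)}.
\]
Using the identity $\sigma^2(\eta^2-\gamma^2) = a - \gamma^2(a+b)$ (which follows directly from the definitions of $a$ and $b$), the numerator collapses to $a(a+pb)$, leaving $\tfrac{1}{a+b} = \tfrac{1}{1+\sigma^2}$. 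Meanwhile the coefficient of $\sum_k h_k$ equals $r_1\cdot\tfrac{a(a+pb)}{a+b} = \tfrac{\sigma^2(\eta^2-\gamma^2)}{1+\sigma^2}$, so forming the difference $\textrm E(\mu_p\mid X_p) - \textrm E(\mu_p\mid X_p, X_p=\max X_i)$ gives exactly \eqref{eq:target-cf}.
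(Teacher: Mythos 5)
Your proposal is correct and follows essentially the same route as the paper: apply Lemmas \ref{lemma:2} and \ref{lemma:3} to obtain $\textrm E(X_i \mid X_p, X_p=\max X_i)$, substitute into the weighted sum from Proposition \ref{prop:1}, and check that the coefficient of $X_p$ collapses to $1/(1+\sigma^2)$ so that only the truncated-density term survives in $\Delta$. Your explicit tower-property step and the identity $\sigma^2(\eta^2-\gamma^2)=a-\gamma^2(a+b)$ in fact carry the algebra through more cleanly than the paper's displayed intermediate lines, which contain minor coefficient/sign typos (e.g., $\tfrac{a}{a+b}X_p$ where Lemma \ref{lemma:2} gives conditional mean $\tfrac{b}{a+b}X_p$), yet both arrive at the same final expression \eqref{eq:target-cf}.
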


\begin{proof}[Proof of Proposition \ref{prop:2}]
Apply Lemma \ref{lemma:2} and \ref{lemma:3} to \eqref{eq:nn-cond},
\begin{equation*}
E (X_i \mid X_p, X_p = \max X_i ) = \frac{a}{a+b}X_p - 
\underbrace{
\left\{
\frac{ab}{a+b} \sum_{j=1}^{p-1} h_j \left( \frac{a}{a+b} X_p \right) + a h_i \left( \frac{a}{a+b} X_p \right)
\right\}
}_{\delta_i}.    
\end{equation*}
Consequently, by \eqref{eq:post-mean} we have
\begin{eqnarray*}
\textrm{E} ( \mu_p \mid X_p, X_p = \max X_i )
& = & r_p X_p + \sum_{i=1}^{p-1}r_i \textrm E (X_i \mid X_p, X_p = \max X_i ) \nonumber \\
& = & \left( 
r_p + \frac{a}{a+b} \sum_{i=1}^{p-1}r_i
\right)
X_p - \sum_{i=1}^{p-1} r_i \delta_i\nonumber \\
& = & \frac{X_p}{a+b} + 
\left\{
\frac{(p-1)ab}{a+b} + a
\right\}
\sum_{i=1}^{p-1} r_i h_i \left( \frac{a}{a+b} X_p \right) \nonumber \\
& = & \textrm E (\mu_p \mid X_p) - \frac{\sigma^2 (\eta^2 - \gamma^2)}{1 + \sigma^2} \sum_{i=1}^{p-1} h_i \left( \frac{\gamma^2 + \sigma^2\eta^2}{1+\sigma^2} X_p \right).
\end{eqnarray*}
The proof is complete.
\end{proof}

Proposition \ref{prop:2} confirms the existence of the selection bias in general. Furthermore, it provides the following interesting insights:
\begin{enumerate}
\item \label{intuition:1}
For fixed $\sigma,$ $p$ and $X_p,$ the sign of the selection bias is the same as the sign of
$
\eta^2 - \gamma^2,
$
i.e., depending on the correlation structures in \eqref{eq:nn-special}, neglecting the fact that $X_p = \max_{1 \le i \le p}X_i$ can either over-estimate or under-estimate $\mu_{i^*}.$ In particular, the selection bias is zero when $\gamma = \eta$. This is a generalization of the first main result in \cite{Senn:2008}, which assumes that $\gamma = \eta = 1;$

\item \label{intuition:2}
For fixed $\gamma$, $\eta$, $p$ and $X_p,$ the selection bias goes to zero as $\sigma$ goes to zero. This is intuitive because $X_p$ approaches $\mu_p$ as $\sigma$ goes to zero, and therefore the fact that $X_p = \max_{1 \le i \le p}X_i$ becomes irrelevant;

\item \label{intuition:3}
For fixed $\sigma,$ $\gamma$, $\eta$ and $p,$ the selection bias disappears for sufficiently large $X_p.$ This is because when $X_p$ goes to infinity,
\begin{equation*}
h_i \left( \frac{\sigma^2 + \gamma^2\eta^2}{1+\sigma^2}X_p \right) \rightarrow 0,
\quad
i=1, \ldots, p-1.
\end{equation*}
This result is in connection with \cite{Dawid:1973}.

\end{enumerate}

\section{Numerical and Simulated Examples}\label{sec:examples}

\subsection{Numerical Examples}

Having derived the closed-form expression for the selection bias, we provide some numerical examples for illustration. Let $\sigma =1,$ $p \in \{3, 5, 10\}$ and $X_p \in \{0, 1, \ldots, 6\}.$ For fixed $p$ and $X_p,$ we consider two cases. In Case 1, we follow \cite{Senn:2008} and let $\gamma^2=0.5$ and $\eta = 1.$ In Case 2, we let $\gamma=1$ and $\eta^2 = 0.5.$ For both cases we calculate the selection bias by \eqref{eq:target-cf}. Results are in Figure \ref{fg:nume}, which align with the insights discussed in the previous section. Furthermore, it appears that the magnitude of the selection bias increases as $p$ increases.

\begin{figure}[ht]
\centering
 \includegraphics[height=.5\linewidth, width= 1\linewidth]{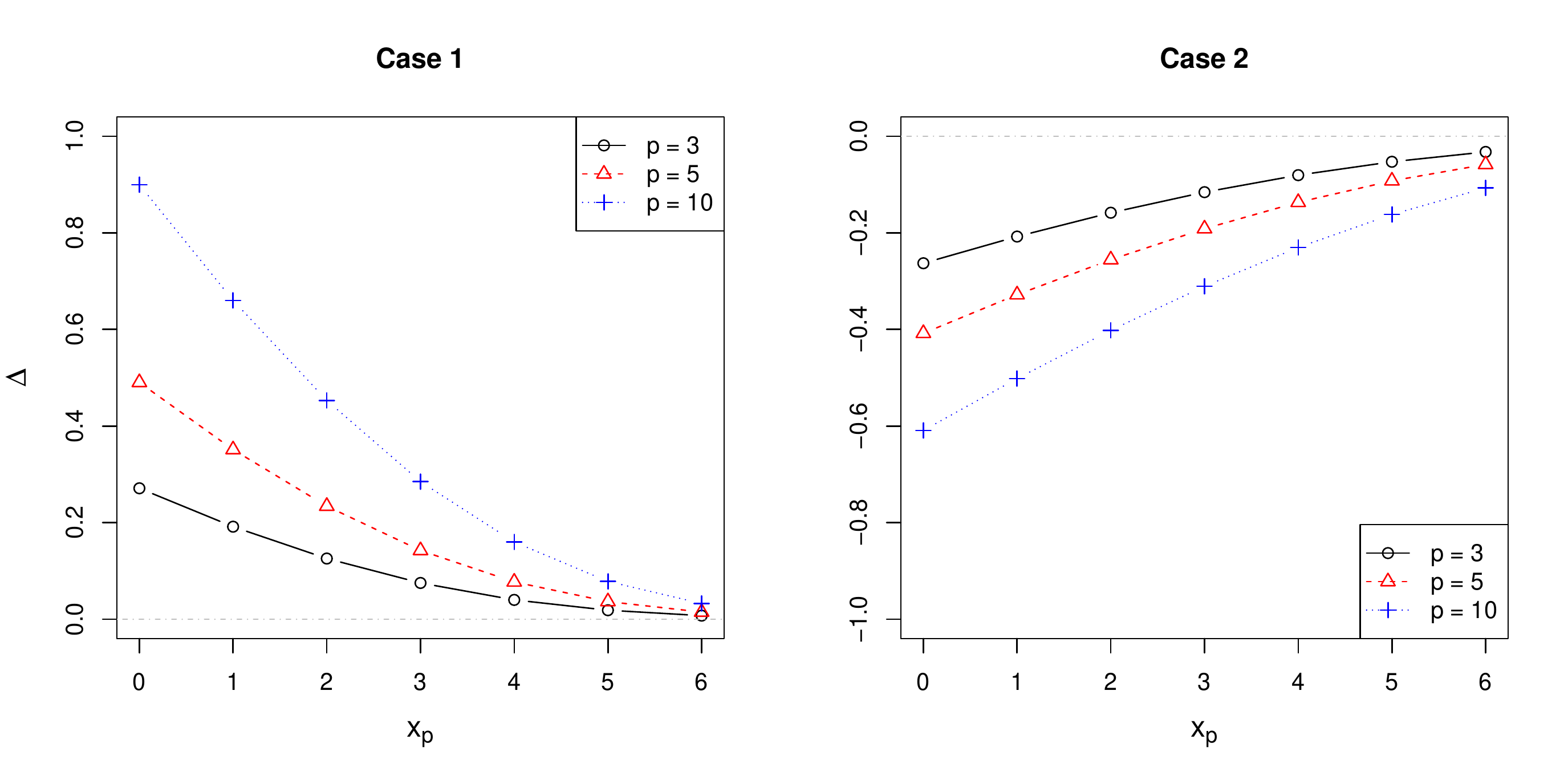}
\caption{Numerical Examples of Selection Bias.}
\label{fg:nume}
\end{figure}

\subsection{Simulated Examples}

The results in \eqref{eq:target-cf} enable us to calculate the ``exact post-selection'' posterior mean
\begin{equation}\label{eq:target-single}
\lambda_{i^*} = \textrm{E} ( \mu_{i^*} \mid X_{i^*}, X_{i^*} = \max X_i).
\end{equation}
For illustration, we revisit the simulated example in \cite{Senn:2008}, where $p=10,$ $\sigma=2,$ $\gamma^2=0.5$ and $\eta = 1.$ Figure \ref{fg:simu} contains 5000 pairs of $(\mu_{i^*}, X_{i^*})$ obtained by repeated sampling, the corresponding linear regression line that \cite{Senn:2008} used to approximate \eqref{eq:target-single}, and the curve that stands for the closed-form expression for \eqref{eq:target-single}. 

The results in Figure \ref{fg:simu} suggest that the regression approximation is relatively accurate for non-extreme values of  $X_{i^*}$ but not for extreme ones. Therefore our analytical solution has an advantage over the regression approximation in \cite{Senn:2008}. For further illustration we examine two concrete examples. First, let 
\begin{equation*}
x_{i^*} = 3.25,
\quad
\mathrm{Pr}(X_{i^*} > x_{i^*}) = 0.486.  
\end{equation*}
Therefore $3.25$ is a ``common'' value of $X_{i^*}.$ In this case the exact value of \eqref{eq:target-single} is
$
\lambda_{i^*} = 0.400
$
and the regression approximation is 
$
\hat \lambda_{i^*} = 0.368.
$
Consequently, although the ``absolute discrepancy'' $|\hat \lambda_{i^*} - \lambda_{i^*}| = 0.032$ seems small, the ``relative discrepancy''
\begin{equation*}
\frac{|\hat \lambda_{i^*} - \lambda_{i^*}|}{|\lambda_{i^*}|} = 8.1\%
\end{equation*}
is moderately large. Second, let 
\begin{equation*}
x_{i^*} = 1.5,
\quad
\mathrm{Pr}(X_{i^*} \le x_{i^*}) = 0.102.  
\end{equation*}
Therefore $1.5$ is a relatively ``uncommon'' (but not extreme) value of $X_{i^*}.$ In this case the absolute and relative discrepancies are respectively 0.062 and 24.7\%, both moderately large.

\begin{figure}[ht]
\centering
 \includegraphics[height=.5\linewidth, width= .5\linewidth]{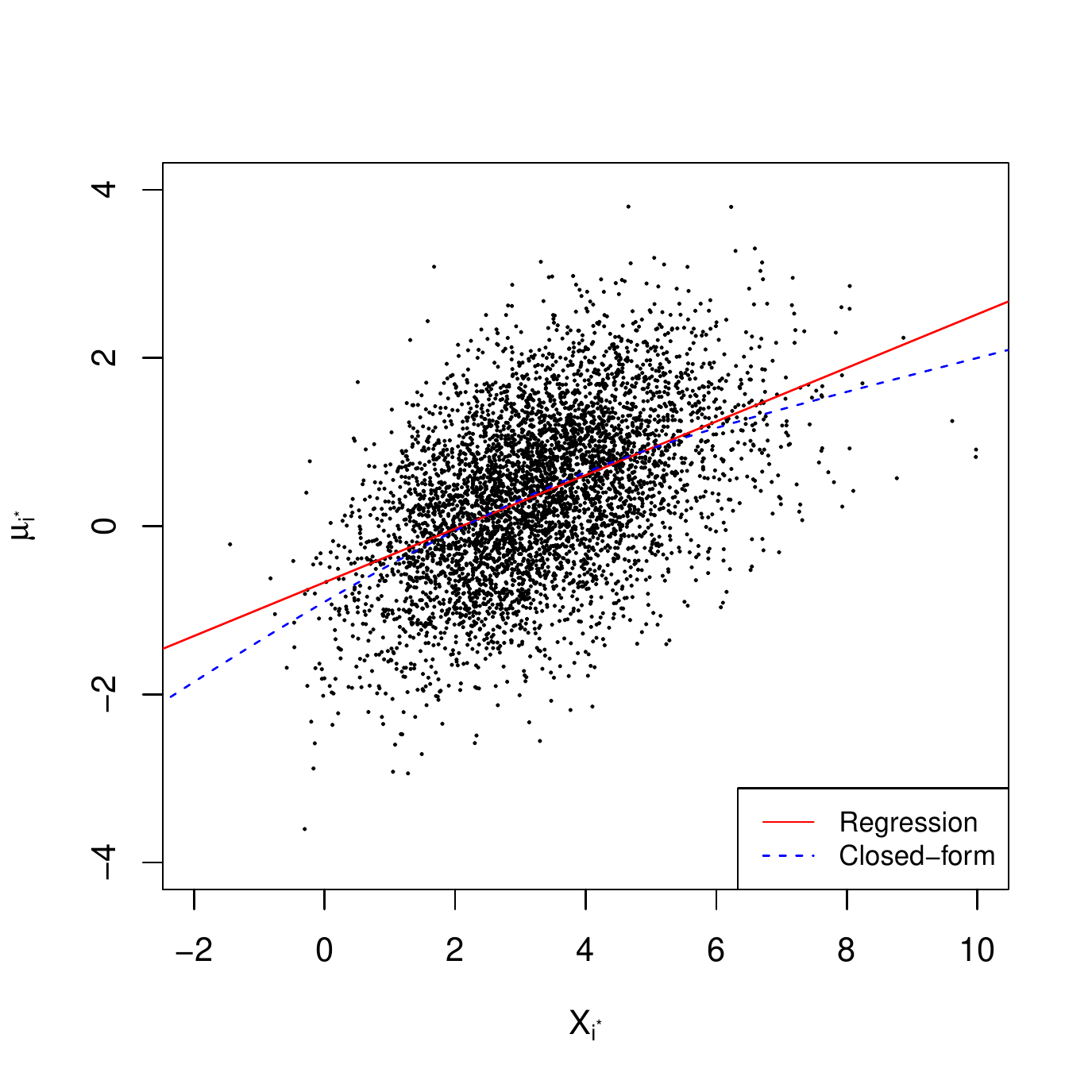}
\caption{``Exact Post-Selection'' Posterior Mean: Regression Approximation (Red Solid Line) and Closed-Form Expression (Blue Dotted Line).}
\label{fg:simu}
\end{figure}

\section{Concluding Remarks}\label{sec:discuss}

For the treatment selection problem, quantifying the selection bias is important from both theoretical and practical perspectives. In this paper, we extend the heuristic discussion in \cite{Senn:2008} and derive the closed-form expression for the selection bias. We illustrate the advantages of our results by numerical and simulated examples.

There are multiple possible future directions based on our current work. First, we can reconcile our Bayesian analysis with Frequentist methods. Second, it is possible to extend our results to more general model specifications by using the Tweedie's formula \citep{Robbins:1956, Efron:2011}. Third, we need to explore ``exact post-selection inference'' in multiple hypothesis testing.

\section*{Acknowledgements}

The authors thank the Co-Editor-in-Chief and a reviewer for their thoughtful comments that substantially improved the presentation of the paper.

\bibliographystyle{apalike}
\bibliography{selection_bias}

\end{document}